\documentclass[12pt,a4paper,reqno]{amsart}
\usepackage{a4wide}
\usepackage{amsmath}
\usepackage{amsfonts}
\usepackage{amsthm}
\usepackage{amssymb}

\newtheorem{theorem}{Theorem}
\newtheorem{lemma}{Lemma}
\newtheorem{proposition}{Proposition}
\newtheorem{remark}[]{Remark}
\newtheorem{corollary}{Corollary}

\newtheorem{conjecture}[]{Conjecture}

\numberwithin{equation}{section}

\begin{document}
\title{The dynamics of gonosomal evolution operators}
\author{Akmal T.~Absalamov}
\address{ Samarkand State University,
Boulevard str., 140104,
Samarkand, Uzbekistan.}
\email{absalamov@gmail.com}
\author{Utkir A.~Rozikov}
\address{Institute of Mathematics, 81,
Mirzo Ulug'bek str., 100170,
Tashkent, Uzbekistan.}
\email{rozikovu@yandex.ru}

\date{}

\maketitle

\begin{abstract}
In this paper we investigate the dynamical
systems generated by gonosomal evolution operator of sex linked
inheritance depending on parameters. Mainly we study dynamical
systems of a hemophilia which is biological group of
disorders connected with genes that diminish the body's ability
to control blood clotting or coagulation that is used to stop
bleeding when a blood vessel is broken. For the gonosomal operator
we discrebe all forms and give explicitly the types of fixed points.
Moreover we study limit points of the trajectories of the
corresponding dynamical system.
\end{abstract}

\section{Introduction}\label{int}
\noindent In biology sex is determined genetically: males and
females have different genes that specify their sexual morphology.
In animals this is often accompanied by chromosomal differences.
There are some sex linked systems which depend on temperature and
even some of systems have sex change phenomenon, see
\cite{Rozikov.book.2013} for more details. For mathematical models
of bisexual population, see \cite{Karlin.book.1984},
\cite{Ladra.book.2013}, \cite{Lyubich.book.1992} and
\cite{Reed.book.1997}. In \cite{Varro.book.2015} an algebra
associated to a sex change is constructed.

In this paper we consider evolution of a hemophilia which is a
lethal recessive $X$-linked disorder: a female carrying two
alleles for hemophilia die. Therefore, if we denote by $X^h$ the
gonosome  $X$ carrying the hemophilia, there are only two female
genotypes: $XX$ and $XX^h$ ($X^hX^h$ is lethal) and two male
genotypes: $XY$ and $X^hY$. We have four types of crosses defined
as
\begin{align*}
&XX   \times{XY}   \rightarrow{{a_1}XX},\: a_2XY, \\
&XX   \times{X^hY} \rightarrow{c_1XX^h},\: c_2XY, \\
&XX^h \times{XY}   \rightarrow{b_1XX},\: b_2XX^h,\: b_3XY, b_4X^hY, \\
&XX^h \times{X^hY} \rightarrow{d_1XX^h}, \: d_2XY, \: d_3X^hY.
\end{align*}

Let $F=\{XX,XX^h\}$ and $M=\{XY,X^h Y\}$ be sets of genotypes.
Assume that state of the set $F$ is given by a real vector $(x,y)$
and state of $M$ by a real vector $(u,v)$. Then a state of the set
$F\cup{M}$ is given by the vector $t=(x,y,u,v)\in{\mathbb{R}^4}$.
If $t'=(x',y',u',v')$ is a state of the system $F\cup{M}$ in the
next generation, then by the above rule we get the evolution
operator $W:\mathbb{R}^4\rightarrow{\mathbb{R}^4}$ defined by
\begin{equation}\label{A1}
W: \left\{
\begin{array}{ll}
x'=a_1xu+b_1yu,\vspace{1,5mm} \cr
y'=c_1xv+b_2yu+d_1yv,\vspace{1,5mm} \cr
u'=a_2xu+c_2xv+b_3yu+d_2yv,\vspace{1,5mm} \cr v'=b_4yu+d_3yv.
\end{array}
 \right.
\end{equation}
This example can be generalized as follows. Suppose that the set
of female types is $F=\{1,2,...,\eta\}$ and the set of male types
is $M=\{1,2,...,\nu\}$. Let $x=(x_1,x_2,...,x_\eta)\in
\mathbb{R}^\eta$ be a state of $F$ and $y=(y_1,y_2,...,y_{\nu})\in
\mathbb{R}^{\nu}$ be a state of $M$. Consider  $p_{ir,j}^{(f)}$
and $p_{ir,l}^{(m)}$ as some inheritance non-negative real
coefficients (not necessarily probabilities) with
$$\sum_{j=1}^{\eta}p_{ir,j}^{(f)}
+\sum_{l=1}^{\nu}p_{ir,l}^{(m)}=1$$ and the corresponding
evolution operator
\begin{equation}\label{A2}
W: \left\{
\begin{array}{ll}
x'_j=\sum_{i,r=1}^{\eta,\nu}p_{ir,j}^{(f)}x_iy_r, \quad
j=1,...,n\vspace{1,5mm}\cr
y'_l=\sum_{i,r=1}^{\eta,\nu}p_{ir,l}^{(m)}x_iy_r, \quad
l=1,...,\nu.
\end{array}
 \right.
\end{equation}
This operator is called gonosomal evolution operator.

The main problem for a given discrete-time dynamical system is to
describe the limit points of the trajectory
$\{t^{(n)}\}_{n=0}^{\infty}$ for arbitrarily given
$t^{(0)}=(x,y)\in{\mathbb{R}^{\eta+\nu}}$, where
$$t^{(n)}=W^{n}(t)=\underbrace {W(W(...W(t^{(0)}))...)}_{n}$$
denotes the $n$ times iteration of $W$ to $t^{(0)}$.

Note that the operator \eqref{A2} describes evolution of a
hemophilia. The dynamical system generated by the gonosomal
operator \eqref{A2} is complicated. In this paper we study the
dynamical system generated by the gonosomal operator \eqref{A1}
which is a particular case of \eqref{A2} corresponding to the case
$\eta=\nu=2$ and the coefficients
\begin{equation}
\begin{aligned}
&p_{11,1}^{(f)} =a_1,  &p_{11,2}^{(f)} &=0,    &p_{11,1}^{(m)} &=a_2,  &p_{11,2}^{(m)}&=0,\\
&p_{12,1}^{(f)} =0,    &p_{12,2}^{(f)} &=c_1,  &p_{12,1}^{(m)} &=c_2,  &p_{12,2}^{(m)}&=0,\\
&p_{21,1}^{(f)} =b_1,  &p_{21,2}^{(f)} &=b_2,  &p_{21,1}^{(m)} &=b_3,  &p_{21,2}^{(m)}&=b_4,\\
&p_{22,1}^{(f)} =0,    &p_{22,2}^{(f)} &=d_1,  &p_{22,1}^{(m)}
&=d_2,  &p_{22,2}^{(m)}&=d_3,
\end{aligned}
\end{equation}
where $a_1$, $a_2$, $c_1$, $c_2$, $b_1$, $b_2$, $b_3$, $b_4$,
$d_1$, $d_2$, $d_3$ are non-negative real numbers such that
\begin{equation}
a_1+a_2=c_1+c_2=b_1+b_2+b_3+b_4=d_1+d_2+d_3=1.
\end{equation}
\begin{remark}
An analogy of this problem was discussed in
\cite{Rozikov.book.2016} for the classical case when
$$a_1=a_2=c_1=c_2=\frac{1}{2}, \ \ b_1=b_2=b_3=b_4=\frac{1}{4}, \ \ d_1=d_2=d_3=\frac{1}{3}.$$
\end{remark}
\section{The types of the fixed points.}\label{S1}
\noindent A point $s$ is called a fixed point of the operator $W$
if $s=W(s)$. Let us find all the forms of the fixed points of $W$
given by \eqref{A1}, i.e.~we solve the following system of
equations for $(x,y,u,v)$
\begin{equation}\label{A3}
\left\{
\begin{array}{ll}
x=a_1xu+b_1yu,\vspace{1,5mm} \cr
y=c_1xv+b_2yu+d_1yv,\vspace{1,5mm} \cr
u=a_2xu+c_2xv+b_3yu+d_2yv,\vspace{1,5mm} \cr v=b_4yu+d_3yv.
\end{array}
\right.
\end{equation}
It is easy to see that $s_{1}=(0,0,0,0)$ is a solution of the
system \eqref{A3}. If $u=0$, then from the first equation we get
$x=0$. If $y=0$, then from the last equation we get $v=0$.
Moreover, if $x=y=0$, then the third and the last equations yield
$u=v=0$. If $u=v=0$, then the first and the second equations give
$x=y=0$. That is why the fixed points of the operator \eqref{A1}
might be of the following forms:
\begin{equation*}
\begin{aligned}
\text{I})\, &s_1=(0,0,0,0),  \qquad \text{II})\, s_2=(x,0,u,0), \qquad \text{III})\, s_3=(0,y,0,v),\\
\text{IV})\,  &s_4=(0,y,u,0), \qquad \text{V}) \, s_5=(x,y,u,v),
\:\:\text{where} \:\: xyuv\neq{0}.
\end{aligned}
\end{equation*}

\begin{remark}
For the given operator \eqref{A1} the forms {\upshape{II), III),
IV)}} of the fixed points are uniquely defined. Indeed the system
of equations \eqref{A3} gives us the following:
\begin{equation*}
\begin{aligned}
(x,0,u,0) &=\Bigl(\frac{1}{a_2},0,\frac{1}{a_1},0 \Bigr)  \quad &\text{when}& \quad a_1,a_2\in(0,1), \\
(0,y,0,v) &=\Bigl(0,\frac{1}{d_3},0,\frac{1}{d_1}\Bigr)   \quad &\text{when}& \quad d_2=0 \;\, \text{and} \;\, d_1,d_3\in(0,1), \\
(0,y,u,0) &=\Bigl(0,\frac{1}{b_3},\frac{1}{b_2},0\Bigr)   \quad &\text{when}& \quad b_1=b_4=0 \;\, \text{and} \;\, b_2,b_3\in(0,1).\\
\end{aligned}
\end{equation*}
\end{remark}
\begin{remark}
For the given operator \eqref{A1} the form {\upshape{V)}} of the
fixed points might not be defined uniquely. To see this, consider
the evolution operator
\begin{equation*}
W_0: \left\{
\begin{aligned}
\begin{array}{ll}
x'&=\frac{1}{2}xu, \vspace{1,5mm} \cr
y'&=\frac{1}{2}yu,\vspace{1,5mm} \cr
u'&=\frac{1}{2}xu+xv+\frac{1}{2}yu+\frac{1}{2}yv,\vspace{1,5mm}\cr
v'&=\frac{1}{2}yv.
\end{array}
\end{aligned}
\right.
\end{equation*}
One can check that $s=(1,2,2,-\frac{1}{2})$ and
$s=(2,2,2,-\frac{2}{3})$ are fixed points of the operator $W_0$
which are of the form V).
\end{remark}
In order to find the type of the fixed points of the operator
\eqref{A1} we consider the Jacobi matrix

$$J(s)=J_W=\left(%
\begin{array}{cccc}
  a_1u      & b_1u      & a_1x+b_1y & 0 \\
  c_1v      & b_2u+d_1v & b_2y      & c_1x+d_1y \\
  a_2u+c_2v & b_3u+d_2v & a_2x+b_3y & c_2x+d_2y \\
  0         & b_4u+d_3v & b_4y      & d_3y \\
\end{array}%
\right)$$ and the corresponding characteristic equation
$\det(J(s)-\lambda I)=0.$ The characteristic equation has the form
\begin{equation}\label{A4}
\lambda^{4}-\lambda^{3}p_1+\lambda^{2}p_2+\lambda p_3=0,
\end{equation}
where
\begin{equation*}
\begin{aligned}
p_1 =&a_2x+(b_3+d_3)y+(a_1+b_2)u+d_1v,\\
p_2 =&(a_1b_3+a_1d_3+b_2d_3-b_4d_1-a_2b_1)yu+a_1b_2u^{2}+(a_1d_1-b_1c_1)uv+(b_3d_3-b_4d_2)y^{2} \\
&+(a_2d_3-b_4c_2)xy+(a_2b_2-b_4c_1)xu+(a_2d_1-c_1d_3-a_1c_2)xv+(b_3d_1-b_2d_2-b_1c_2)yv,  \\
p_3 =&a_2d_3xy+b_4c_1xu+(2a_2d_1+c_1d_3-b_3c_1+a_1c_2-b_2c_2)xv+(a_1b_4d_2-a_1b_3d_3)y^{2}u\\
&+(a_1b_4d_1-a_1b_2d_3-b_1b_4c_1)yu^{2}+(a_1b_2d_2-a_1b_3d_1+b_1b_3c_1)yuv\\
&+(a_2b_1c_1+a_1b_2c_2-a_1a_2d_1)xuv+(b_2b_4c_2-a_2b_2d_3-a_1a_2d_3)xyu\\
&+(b_4c_2d_1-b_4c_1d_2+b_3c_1d_3-a_2d_1d_3)xyv+a_2c_1x^{2}(b_4u+d_3v)\\
&+(c_2d_1-c_1d_2)v^{2}(a_1x+b_1y)+b_1c_2d_3y^{2}v.
\end{aligned}
\end{equation*}
Clearly, $\lambda=0$ is the all eigenvalues of the fixed point
$s_1$. Thus $s_1$ is attracting fixed point.

\begin{lemma}\label{A5}
$\lambda=0$ and $\lambda=2$ are eigenvalues for the fixed points
$s_2$, $s_3$, $s_4$, $s_5$.
\end{lemma}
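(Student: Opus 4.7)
The plan rests on two structural observations that apply uniformly to all four nontrivial fixed points and thus obviate any case analysis. The first concerns $\lambda=0$ and is essentially a one-line remark; the second concerns $\lambda=2$ and is a consequence of the homogeneity of the operator $W$.

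For $\lambda=0$, the characteristic polynomial in \eqref{A4} already has no constant term. It factors as
$$\lambda\bigl(\lambda^{3}-\lambda^{2}p_{1}+\lambda p_{2}+p_{3}\bigr)=0,$$
so $\lambda=0$ is a root of $\det(J(s)-\lambda I)=0$ at \emph{every} fixed point $s$, in particular at $s_{2},s_{3},s_{4},s_{5}$.

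For $\lambda=2$, the key observation is that each coordinate function of $W$ in \eqref{A1} is a bilinear form in $(x,y)$ and $(u,v)$, so $W$ is a homogeneous polynomial map of degree $2$ in $t=(x,y,u,v)$. Euler's identity for homogeneous maps then yields
$$J_{W}(t)\cdot t \;=\; 2\,W(t)\qquad \text{for every } t\in\mathbb{R}^{4}.$$
At any fixed point $s$ one has $W(s)=s$, so $J(s)\cdot s=2s$. Since each of $s_{2},s_{3},s_{4},s_{5}$ is a nonzero vector by the explicit forms recorded in the preceding remarks, the vector $s$ itself is a nontrivial eigenvector of $J(s)$, and therefore $\lambda=2$ is an eigenvalue at each of these four fixed points.

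I do not anticipate a substantive obstacle; the only identity to verify is Euler's, which is immediate from differentiating $W(\alpha t)=\alpha^{2}W(t)$ in $\alpha$ and evaluating at $\alpha=1$, or equivalently from multiplying the displayed Jacobi matrix $J(s)$ on the right by $s$ row by row. An alternative but less illuminating route would be to substitute each of the four fixed-point forms into the cubic factor $\lambda^{3}-\lambda^{2}p_{1}+\lambda p_{2}+p_{3}$ and check $8-4p_{1}+2p_{2}+p_{3}=0$ in each case; this would reduce to a routine calculation using the normalization $a_{1}+a_{2}=c_{1}+c_{2}=b_{1}+b_{2}+b_{3}+b_{4}=d_{1}+d_{2}+d_{3}=1$, but would obscure the underlying homogeneity principle.
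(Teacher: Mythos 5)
Your proof is correct, but for the $\lambda=2$ part it takes a genuinely different route from the paper's. The paper substitutes each of the four fixed-point forms into the quartic \eqref{A4} and verifies $8-4p_1+2p_2+p_3=0$ case by case; for $s_5$ this requires eliminating variables from \eqref{A3} through a chain of auxiliary identities, and, as the paper's own subsequent remark concedes, the computation as written assumes $b_1$, $b_4$, $c_1$, $c_2x+d_2y$ are all nonzero, with the degenerate cases deferred to separate (unwritten) arguments. Your Euler-identity argument $J_W(t)\,t=2\,W(t)$ --- which one confirms at a glance by multiplying the displayed Jacobi matrix by $(x,y,u,v)^{T}$ row by row --- replaces all of this with the single observation that a nonzero fixed point is an eigenvector of its own Jacobian with eigenvalue $2$, uniformly over all parameter values, all four forms, and all degenerate subcases. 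It also applies verbatim to the general gonosomal operator \eqref{A2}, which is likewise homogeneous of degree $2$, so it settles the $\lambda=2$ half of the paper's Conjecture; and a companion observation in the same spirit --- each coordinate of $W$ is separately homogeneous of degree $1$ in $(x,y)$ and in $(u,v)$, whence $J_W(t)\cdot(x,y,-u,-v)^{T}=W(t)-W(t)=0$ --- gives the $\lambda=0$ half at every nonzero fixed point without computing the characteristic polynomial at all. What the paper's brute-force computation buys instead is the explicit expressions for $p_1$ and $p_2$ at each fixed point, which it reuses in the following lemma and corollaries to decide between the saddle and nonhyperbolic cases; your argument does not produce those, but that is not part of the statement being proved here.
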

\begin{proof}
From the equation \eqref{A4} it is clear that $\lambda=0$ is an
eigenvalue for all forms of the fixed points. If $s_2=(x,0,u,0)$
is a fixed point with $xu\neq0$ then the coefficients of the
equation \eqref{A4} simplify as
\begin{align*}
p_1&=a_2x+(a_1+b_2)u,\\
p_2&=a_1b_2u^2+(a_2b_2-b_4c_1)xu,\\
p_3&=a_1b_4c_1xu^{2}+a_2b_4c_1x^{2}u.
\end{align*}
Hence we get
$$8-4p_1+2p_2+p_3=(4-2b_2u-b_4c_1xu)(2-a_2x-a_1u)=0$$
which implies that $\lambda=2$ is an eigenvalue.

If $s_3=(0,y,0,v)$ is a fixed point with $yv\neq0$, then the
coefficients of the equation \eqref{A4} simplify as:
\begin{align*}
p_1&=(b_3+d_3)y+d_1v,\\
p_2&=b_3d_3y^2+(b_3d_1-b_1c_2)yv,\\
p_3&=b_1c_2d_1yv^{2}+b_1c_2d_3y^{2}v.
\end{align*}
Consequently,
$$8-4p_1+2p_2+p_3=(4-2b_3y-b_1c_2yv)(2-d_3y-d_1v)=0$$
and thus $\lambda=2$ is an eigenvalue.

If $s_4=(0,y,u,0)$ is a fixed point with $yu\neq0$, then the
coefficients of the equation \eqref{A4} simplify as
\begin{align*}
p_1&=(b_3+d_3)y+(a_1+b_2)u,\\
p_2&=a_1b_2u^2+b_3d_3y^{2}+(a_1b_3+a_1d_3+b_2d_3)yu,\\
p_3&=-a_1b_3d_3y^{2}u-a_1b_2d_3yu^{2}.
\end{align*}
Therefore,
$$8-4p_1+2p_2+p_3=(4-2a_1u-2d_3y-a_1d_3yu)(2-b_3y-b_2u)=0$$
which shows $\lambda=2$ is an eigenvalue.

If $s_5=(x,y,u,v)$ is a fixed point, with $xyuv\neq{0}$, then from
the first and the last equations of the system \eqref{A3} we find
$x=\frac{b_1yu}{1-a_1u}$ and $v=\frac{b_4yu}{1-d_3y}$.
Substituting these values to other equations in the system
\eqref{A3}, we obtain
\begin{equation}\label{A6}
\begin{aligned}
(a_1b_4d_2-a_1b_3d_3)y^2u+b_1c_2d_3y^2v-a_1a_2d_3xyu+a_2d_3xy=
(-a_1b_3-a_1d_3)yu-\\(b_3d_3-b_4d_2)y^2+b_1c_2yv+a_2x+a_1u+
(b_3+d_3)y-1-a_1a_2xu,\\
\end{aligned}
\end{equation}
\begin{equation}\label{A7}
\begin{aligned}
&(a_1b_4d_1-a_1b_2d_3-b_1b_4c_1)yu^2=(a_1+b_2)u+d_3y-1-a_1b_2u^2-\\
&(a_1d_3+b_2d_3-b_4d_1)yu.
\end{aligned}
\end{equation}
Note that we have $u\neq\frac{1}{a_1}$ and $y\neq\frac{1}{d_3}$,
for otherwise the first and the last equations in the system
\eqref{A3} would give us $y=0$ and $u=0$, contradicting to the
condition $xyuv\neq{0}$. Therefore the obtained equations
\eqref{A6} and \eqref{A7} are well defined. Now from the second
and the last equations in the system \eqref{A3} we find
$y=\frac{c_1xv}{1-b_2u-d_1v}$ and $v=\frac{b_4yu}{1-d_3y}$.
Substituting these values into the first, the third and the last
equations in the system \eqref{A3}, we obtain
\begin{align}
&a_1b_2u^2+(a_1d_1-b_1c_1)uv=(a_1+b_2)u+d_1v-1,\\\nonumber
&(b_4c_2d_1-b_4c_1d_2+b_3c_1d_3-a_2d_1d_3)xyv+(b_2b_4c_2-a_2b_2d_3)xyu+
a_2d_3xy+\\
&(a_2d_1-b_3c_1)xv=a_2x+d_3y+b_2u+d_1v-1-b_2d_3yu-a_2b_2xu+b_4c_2xy-d_1d_3yv,\\
&b_4c_1xu+c_1d_3xv=1-b_2u-d_1v.
\end{align}
Moreover, from the first and the third equations in the system
\eqref{A3} we find $x=\frac{b_1yu}{1-a_1u}$ and
$u=\frac{c_2xv+d_2yv}{1-a_2x-b_3y}$. Substituting these values
into the second and the last equations in the system \eqref{A3} we
obtain
\begin{equation}
\begin{aligned}
&(a_1b_2d_2-a_1b_3d_1+b_1b_3c_1)yuv+(a_2b_1c_1+a_1b_2c_2-a_1a_2d_1)xuv+\\
&(a_2d_1-b_2c_2)xv=a_2x+b_3y+a_1u+d_1v-1-
a_1b_3yu-\\
&(a_1d_1-b_1c_1)uv-(b_3d_1-b_2d_2)yv-a_1a_2xu,
\end{aligned}
\end{equation}
\begin{equation}
(b_3d_3-b_4d_2)y^2+(a_2d_3-b_4c_2)xy=a_2x+(b_3+d_3)y-1.
\end{equation}
Taking into account all the obtained equations and the system
\eqref{A3} we get
\begin{equation}\label{A8}
\begin{aligned}
8-4p_1+2p_2+p_3=&
\frac{1-d_1v}{u}(u-a_2xu-c_2xv
-b_3yu-d_2yv)\\
&+\bigl(\frac{d_2v}{u}-\frac{a_2x}{y}\bigr)
(y-c_1xv-b_2yu-d_1yv)\\
&+\frac{c_2v}{u}(x-a_1xu-b_1yu)=0.
\end{aligned}
\end{equation}
This shows that $\lambda=2$ is an eigenvalue for nonzero fixed
point of operator \eqref{A1}. Lemma~\ref{A5} is proved.
\end{proof}

\begin{remark}
We have proved the Lemma \ref{A5} for the case when
$b_1,b_4,c_1,c_2x+d_2y$ are nonzero. In case when some of the
numbers $b_1,b_4,c_1,c_2x+d_2y$ are zero, then
Lemma~{\upshape\ref{A5}} can be proven similarly. For instance if
we have only $b_4=0$ then the last equation of the system
\eqref{A3} gives us $y=\frac{1}{d_3}$ as $xyuv\neq{0}$. In this
case we can rewrite \eqref{A8} as
\begin{align*}
8-4p_1+2p_2+p_3=&\frac{1-d_1v}{u}(u-a_2xu-c_2xv-b_3yu-d_2yv)\\
&+\bigl(\frac{d_2v}{u}-\frac{a_2x}{y}+\frac{1}{y}\bigr)
(y-c_1xv-b_2yu-d_1yv)\\
&+\frac{c_2v}{u}(x-a_1xu-b_1yu)=0.
\end{align*}
\end{remark}
\begin{conjecture}
$\lambda=0$ and $\lambda=2$ are eigenvalues of the gonosomal
evolution operator \eqref{A2} corresponding to nonzero fixed
points.
\end{conjecture}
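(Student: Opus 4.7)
The plan is to avoid expanding a general characteristic polynomial and instead exhibit explicit eigenvectors of the Jacobian $J_W$ directly, by exploiting the fact that every component of \eqref{A2} is a bilinear form in the pair $(x,y)\in\mathbb{R}^{\eta}\times\mathbb{R}^{\nu}$.

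First I would write $J_W(x,y)$ in block form
$$J_W(x,y)=\begin{pmatrix}A & B\\ C & D\end{pmatrix},\quad A_{ji}=\tfrac{\partial x'_j}{\partial x_i},\ B_{jr}=\tfrac{\partial x'_j}{\partial y_r},\ C_{li}=\tfrac{\partial y'_l}{\partial x_i},\ D_{lr}=\tfrac{\partial y'_l}{\partial y_r}.$$
Because $x'_j=\sum_{i,r}p^{(f)}_{ir,j}x_iy_r$ is bilinear in $(x,y)$, Euler's identity applied separately in each variable group gives $Ax=x'$ and $By=x'$; the analogous relations $Cx=y'$ and $Dy=y'$ hold for the $y'_l$. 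Substituting the fixed-point identities $x'=x$ and $y'=y$, this produces at once
$$J_W\begin{pmatrix}x\\ y\end{pmatrix}=\begin{pmatrix}Ax+By\\ Cx+Dy\end{pmatrix}=2\begin{pmatrix}x\\ y\end{pmatrix},\qquad J_W\begin{pmatrix}x\\ -y\end{pmatrix}=\begin{pmatrix}Ax-By\\ Cx-Dy\end{pmatrix}=\begin{pmatrix}0\\ 0\end{pmatrix},$$
so $\lambda=2$ appears with eigenvector $(x,y)^T$ and $\lambda=0$ with eigenvector $(x,-y)^T$. To confirm these eigenvectors are genuinely nonzero, I would note that if $x=0$ then $y_l=\sum_{i,r}p^{(m)}_{ir,l}\cdot 0\cdot y_r=0$ for every $l$, and symmetrically $y=0$ forces $x=0$; hence any nonzero fixed point has both $x\neq 0$ and $y\neq 0$, so the two displayed column vectors are nonzero and in fact linearly independent.

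I do not anticipate a real technical obstacle: the only interpretive step is to recognize that the lengthy identity \eqref{A8} underlying Lemma~\ref{A5} is nothing more than the scalar shadow of the vector equation $J_W(x,y)(x,-y)^T=0$. Reframed at the level of eigenvectors, the argument uses only bilinearity together with the fixed-point equations themselves, so no case split among the fixed-point forms I)--V) is required, and the various degenerate situations noted in the remark following Lemma~\ref{A5} (for instance $b_4=0$ in \eqref{A1}) are handled uniformly by the same identity.
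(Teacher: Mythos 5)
Your argument is correct, and it does something the paper does not: the paper states this claim only as a conjecture (its Lemma~1 verifies it solely for the four-parameter-family operator \eqref{A1}, by expanding the coefficients $p_1,p_2,p_3$ of the characteristic polynomial and checking $8-4p_1+2p_2+p_3=0$ separately for each fixed-point form II)--V), with a further remark needed for degenerate parameter values). Your block-Jacobian computation is sound: since each $x'_j$ and $y'_l$ is separately homogeneous of degree one in $x$ and in $y$, Euler's identity gives $Ax=By=x'$ and $Cx=Dy=y'$, so at a fixed point $J_W(x,y)^T=2(x,y)^T$ and $J_W(x,-y)^T=0$; and your observation that a nonzero fixed point must have both blocks $x\neq0$ and $y\neq0$ (because $x=0$ forces $y'=y=0$ and vice versa) guarantees both eigenvectors are nonzero. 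This settles the general statement for \eqref{A2} uniformly, with no case split and no restriction on the coefficients, and as a byproduct exhibits the eigenvectors explicitly --- information the determinant computation does not provide. What the paper's computation buys instead is the full characteristic polynomial for \eqref{A1}, which it then uses to classify the remaining two eigenvalues (saddle versus nonhyperbolic) in the subsequent lemma and corollaries; your argument does not replace that part. One small slip in your closing aside: the identity \eqref{A8} is $\det(J-\lambda I)/2$ evaluated at $\lambda=2$, so it is the scalar shadow of $(J_W-2I)(x,y)^T=0$, not of $J_W(x,-y)^T=0$ (the latter corresponds to the absent constant term of the characteristic polynomial). This does not affect the validity of your proof.
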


\begin{lemma}
If either $p_1-p_2=3$ or $3p_1-p_2=7$ holds, then $s=(x,y,u,v)$ is
a nonhyperbolic fixed point. Otherwise it is a saddle point in the
case $x^2+y^2+u^2+v^2\neq{0}$.
\end{lemma}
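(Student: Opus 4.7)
The plan is to leverage Lemma~\ref{A5} to reduce the quartic characteristic equation \eqref{A4} to a quadratic factor and then read off hyperbolicity from the roots of that quadratic. Since $\lambda=0$ and $\lambda=2$ are already known to be eigenvalues of $J(s)$ at any nonzero fixed point, I would first write
$$\lambda^4-p_1\lambda^3+p_2\lambda^2+p_3\lambda=\lambda(\lambda-2)\bigl(\lambda^2+A\lambda+B\bigr),$$
expand the right-hand side, and match coefficients. Using the identity $p_3=4p_1-2p_2-8$, which is simply a rewriting of the relation $8-4p_1+2p_2+p_3=0$ established in Lemma~\ref{A5}, this gives $A=2-p_1$ and $B=p_2-2p_1+4$, so the two remaining eigenvalues $\lambda_3,\lambda_4$ are exactly the roots of
$$Q(\lambda)=\lambda^2+(2-p_1)\lambda+(p_2-2p_1+4).$$

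Next, I would characterize nonhyperbolicity. The fixed point $s$ is nonhyperbolic iff some eigenvalue of $J(s)$ lies on the unit circle; since $|0|\neq1$ and $|2|\neq1$, this reduces to asking when $Q$ has a root of modulus one. Substituting the real candidates gives
$$Q(1)=7-3p_1+p_2,\qquad Q(-1)=3-p_1+p_2,$$
so $\lambda=1$ is an eigenvalue precisely when $3p_1-p_2=7$, and $\lambda=-1$ precisely when $p_1-p_2=3$. In either case $s$ is nonhyperbolic, which settles the first half of the lemma.

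For the converse, assume neither equation holds. The eigenvalue set is $\{0,2,\lambda_3,\lambda_4\}$, where $\lambda=0$ already provides a contracting direction and $\lambda=2$ an expanding one; consequently, as soon as hyperbolicity is verified the fixed point is automatically a saddle, and I would only need to verify $|\lambda_{3,4}|\neq 1$. The main obstacle I foresee is excluding the borderline situation in which $Q$ admits a pair of complex conjugate roots of modulus one, i.e.\ $p_2-2p_1+4=1$ together with a negative discriminant, since this case is not directly listed among the two hypotheses. I would handle it by invoking the stochasticity constraints on $a_i,b_j,c_k,d_\ell$ from \eqref{A3}, which bound $p_1$ and $p_2$ sufficiently to force $|B|\neq 1$ whenever $Q$ has complex roots, thereby confirming that the two stated conditions exhaust the nonhyperbolic cases and every remaining nonzero fixed point is a saddle.
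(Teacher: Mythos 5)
Your proposal follows essentially the same route as the paper's proof: both use Lemma~1 to factor the characteristic polynomial as $\lambda(\lambda-2)\bigl(\lambda^2+(2-p_1)\lambda+p_2-2p_1+4\bigr)$ and then identify $p_1-p_2=3$ and $3p_1-p_2=7$ as exactly the conditions under which $\lambda=-1$, respectively $\lambda=1$, is a root of the quadratic factor (the paper writes this by expressing the discriminant as $p_1^2+4(p_1-p_2-3)$ and as $(p_1-4)^2+4(3p_1-p_2-7)$, while you evaluate $Q(\pm1)$ -- the same computation). The one place you go beyond the paper, namely flagging the possibility of a complex-conjugate pair of eigenvalues of modulus one (which would require $p_2-2p_1+4=1$ with negative discriminant and would constitute a third nonhyperbolic case not covered by the two stated conditions), is a genuine issue, but your proposed resolution via the stochasticity constraints is only asserted, not carried out; since the paper's proof does not address this case at all, your argument is no less complete than the published one on this point.
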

\begin{proof}
For other roots of the equation \eqref{A4}  we have
\begin{align*}
\lambda_{3,4}&=-1+\frac{p_1\pm\sqrt{p_1^2+4p_1-4p_2-12}}{2}\\
&=-1+\frac{p_1\pm\sqrt{p_1^2+4(p_1-p_2-3)}}{2}\\
&=-1+\frac{p_1\pm\sqrt{(p_1-4)^2+4(3p_1-p_2-7)}}{2},
\end{align*}
which completes the proof.
\end{proof}
\begin{corollary} It holds that
\begin{align*}
&s_2 \quad is \quad \left\{
\begin{aligned}
\begin{array}{ll}
saddle      &\quad if \quad  b_4c_1\neq{a_2(a_1\pm{b_2})}, \cr
nonhyperbolic   &\quad if \quad  b_4c_1={a_2(a_1\pm{b_2})},\\
\end{array}
\end{aligned}
\right.\\[0,5ex]
&s_3 \quad is \quad \left\{
\begin{aligned}
\begin{array}{ll}
saddle      &\quad if \quad b_1c_2\neq{d_1(d_3\pm{b_3})}, \cr
nonhyperbolic &\quad if \quad b_1c_2={d_1(d_3\pm{b_3})},
\end{array}
\end{aligned}
\right.\\[0,5ex]
&s_4 \quad is \quad \left\{
\begin{aligned}
\begin{array}{ll}
saddle    &\quad if \quad a_1d_3\neq-1\pm(a_1b_3+b_2d_3), \cr
nonhyperbolic &\quad if \quad a_1d_3=-1\pm{(a_1b_3+b_2d_3)}.
\end{array}
\end{aligned}
\right.
\end{align*}
\end{corollary}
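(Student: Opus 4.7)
The strategy is to apply the preceding lemma, which says that a nonzero fixed point $s$ is nonhyperbolic exactly when $p_1-p_2=3$ or $3p_1-p_2=7$, and is a saddle otherwise. So for each of $s_2,s_3,s_4$ all that remains is to substitute the explicit coordinates supplied by Remark~2 into the simplified expressions for $p_1$ and $p_2$ that were already derived in the course of proving Lemma~\ref{A5}, and then to translate the two equations $p_1-p_2=3$ and $3p_1-p_2=7$ into conditions on the parameters.

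Concretely, for $s_2=(1/a_2,0,1/a_1,0)$ the proof of Lemma~\ref{A5} gives
\[
p_1=a_2x+(a_1+b_2)u,\qquad p_2=a_1b_2u^{2}+(a_2b_2-b_4c_1)xu.
\]
Plugging in $x=1/a_2$, $u=1/a_1$ yields $p_1=2+b_2/a_1$ and $p_2=2b_2/a_1-b_4c_1/(a_1a_2)$, so $p_1-p_2=3$ reduces (after multiplying by $a_1a_2$) to $b_4c_1=a_2(a_1+b_2)$ and $3p_1-p_2=7$ reduces to $b_4c_1=a_2(a_1-b_2)$. Combining the two sign choices gives the stated criterion $b_4c_1=a_2(a_1\pm b_2)$ for nonhyperbolicity, and its negation for the saddle case. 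I would handle $s_3=(0,1/d_3,0,1/d_1)$ in exactly the same way, using the expressions for $p_1,p_2$ written down for that case in the proof of Lemma~\ref{A5}; the symmetry between the $(a_1,a_2,b_2,b_4,c_1)$ and $(d_3,d_1,b_3,b_1,c_2)$ roles makes the computation parallel, and the two nonhyperbolicity conditions collapse to $b_1c_2=d_1(d_3\pm b_3)$.

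The case of $s_4=(0,1/b_3,1/b_2,0)$ is analogous but uses the slightly longer expressions
\[
p_1=(b_3+d_3)y+(a_1+b_2)u,\qquad p_2=a_1b_2u^{2}+b_3d_3y^{2}+(a_1b_3+a_1d_3+b_2d_3)yu
\]
from the proof of Lemma~\ref{A5}. After substitution and clearing denominators by multiplying through by $b_2b_3$, both of $p_1-p_2=3$ and $3p_1-p_2=7$ become linear relations in the product $a_1d_3$, giving the two values $a_1d_3=-b_2b_3\pm(a_1b_3+b_2d_3)$; this is the stated criterion for $s_4$ to be nonhyperbolic, and its negation for saddle.

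There is no real obstacle here: the content is entirely bookkeeping, and the only care required is to keep track of the sign choice $\pm$ in the two equations $p_1-p_2=3$ and $3p_1-p_2=7$, which correspond respectively to the eigenvalue $\lambda=-1$ and the eigenvalue $\lambda=+1$ among the nontrivial roots $\lambda_{3,4}$ of \eqref{A4}. Once the substitutions are carried out, the corollary follows immediately from the preceding lemma.
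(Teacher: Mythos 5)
Your strategy is exactly the intended one: the paper gives no separate argument for this corollary, and it is meant to follow by substituting the explicit fixed points of Remark~2 into the simplified $p_1,p_2$ from the proof of Lemma~\ref{A5} and then imposing $p_1-p_2=3$ (eigenvalue $-1$) or $3p_1-p_2=7$ (eigenvalue $+1$) from the subsequent lemma. Your computations for $s_2$ and $s_3$ reproduce the paper's conditions $b_4c_1=a_2(a_1\pm b_2)$ and $b_1c_2=d_1(d_3\pm b_3)$ correctly.

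For $s_4$, however, your answer and the paper's statement genuinely disagree, and yours is the correct one. Substituting $y=1/b_3$, $u=1/b_2$ gives $p_1-p_2=2-\tfrac{a_1}{b_2}-\tfrac{d_3}{b_3}-\tfrac{a_1d_3}{b_2b_3}$ and $3p_1-p_2=6+\tfrac{a_1}{b_2}+\tfrac{d_3}{b_3}-\tfrac{a_1d_3}{b_2b_3}$, so clearing denominators yields $a_1d_3=-b_2b_3\pm(a_1b_3+b_2d_3)$, as you found, not $a_1d_3=-1\pm(a_1b_3+b_2d_3)$ as printed. Since $s_4$ exists only when $b_1=b_4=0$, hence $b_2+b_3=1$ and $b_2b_3\le 1/4$, the two conditions are not equivalent. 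A concrete check confirms your version: take $a_1=a_2=b_2=b_3=d_3=1/2$, $b_1=b_4=0$; then $s_4=(0,2,2,0)$ and the Jacobian there has eigenvalues $0,1,1,2$, so $s_4$ is nonhyperbolic, which satisfies your condition ($\tfrac14=-\tfrac14+\tfrac12$) but violates the paper's. The printed condition apparently results from forgetting to multiply the constant on the right-hand side by $b_2b_3$ when clearing denominators. One further caveat, inherited from the lemma you invoke rather than introduced by you: the dichotomy ``nonhyperbolic iff $p_1-p_2=3$ or $3p_1-p_2=7$'' ignores the possibility that $\lambda_{3,4}$ are a complex-conjugate pair of modulus $1$, so strictly speaking both the corollary and your proof only rule out real eigenvalues $\pm1$; this does not affect the comparison between your argument and the paper's.
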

\begin{corollary}
Let $s=(x,y,u,v)$ be a fixed point for the operator \eqref{A1}
such that $x^2+y^2+u^2+v^2\neq{0}$. Then it is either
nonhyperbolic or saddle point. Furthermore the gonosomal evolution
operator \eqref{A1} does not have repelling fixed points.
\end{corollary}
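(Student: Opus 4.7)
The plan is to deduce the Corollary immediately from Lemma~\ref{A5} combined with the preceding lemma. By Lemma~\ref{A5}, every fixed point $s=(x,y,u,v)$ with $x^2+y^2+u^2+v^2\neq 0$ has both $\lambda=0$ and $\lambda=2$ among the eigenvalues of the Jacobi matrix $J(s)$. Since $|0|<1<|2|$, the spectrum of $J(s)$ simultaneously contains one eigenvalue strictly inside the unit circle and one strictly outside.

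Recall that a hyperbolic fixed point is attracting when all its eigenvalues satisfy $|\lambda|<1$ and repelling when all of them satisfy $|\lambda|>1$. The simultaneous presence of $0$ and $2$ in the spectrum rules out both possibilities at once. This already settles the second assertion: no nonzero fixed point of \eqref{A1} can be repelling, and since the trivial fixed point $s_1=(0,0,0,0)$ has $\lambda=0$ as its only eigenvalue and is attracting (as noted just before Lemma~\ref{A5}), the operator \eqref{A1} has no repelling fixed points at all.

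For the first assertion, one only needs to classify the remaining two eigenvalues $\lambda_{3,4}$. These are given explicitly by the quadratic formula displayed in the proof of the preceding lemma, and the dichotomy $p_1-p_2=3$ or $3p_1-p_2=7$ established there characterizes precisely when one of them lands on the unit circle. If at least one of these conditions holds, then $s$ is nonhyperbolic by definition; otherwise all four eigenvalues lie off the unit circle, and because both the interior and the exterior of the unit circle are already occupied (by $\lambda=0$ and $\lambda=2$, respectively), the point $s$ is a hyperbolic saddle.

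The only step that might look subtle is ensuring Lemma~\ref{A5} applies to every nonzero fixed point, not merely to the generic case $b_1 b_4 c_1 (c_2 x+d_2 y)\neq 0$ treated in its proof. The degenerate situations are handled by the Remark following Lemma~\ref{A5}, where the identity \eqref{A8} is modified in exactly the way needed to keep $\lambda=2$ as a root of the characteristic polynomial, so no additional computation is required beyond invoking that Remark.
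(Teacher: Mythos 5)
Your proposal is correct and follows essentially the route the paper intends: Lemma~\ref{A5} supplies the eigenvalues $0$ and $2$ (so no fixed point can be repelling, since an eigenvalue inside the unit circle is always present), and the preceding lemma's analysis of $\lambda_{3,4}$ yields the nonhyperbolic/saddle dichotomy. The only minor caveat is that your phrase ``characterizes precisely when one of them lands on the unit circle'' inherits the paper's omission of the complex-conjugate case $|\lambda_{3,4}|=1$ with negative discriminant, but this does not affect the corollary: whenever no eigenvalue lies on the unit circle, the presence of $0$ and $2$ forces a saddle, so the stated disjunction holds in every case.
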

\section{The $\omega$-limit set and the main results.}\label{s2}
\noindent The problem of describing the $\omega$-limit set of a
trajectory is of great importance in the theory of dynamical
systems.
\begin{proposition}\label{A9}
The point $s=(0,0,...,0)\in \mathbb{R}^{\eta+\nu}$ is a fixed
point for the operator \eqref{A2}. If $\delta\in [0,4)$ and the
coefficients of the operator \eqref{A2} are nonnegative real
numbers, then for any initial point $t\in Q_\delta$, we have
\begin{equation}\label{Int.-6}
\lim\limits_{n\rightarrow \infty}W^n (t)=\underbrace
{(0,0,...,0)}_{\eta+\nu} \ ,
\end{equation}
where $$Q_\delta=\{(x_1,...,x_\eta,y_1,...,y_\nu)\in
\mathbb{R}^{\eta+\nu}:
\sum_{j=1}^{\eta}x_j+\sum_{l=1}^{\nu}y_l\leq{\delta}, \,
x_j\geq{0}, y_l\geq{0}, j=\overline{1,\eta},
l=\overline{1,\nu}\}$$
\end{proposition}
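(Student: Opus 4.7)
The plan is to reduce the $(\eta+\nu)$-dimensional dynamics to a one-dimensional recursion on the total mass $S = \sum_j x_j + \sum_l y_l$ and then analyze the scalar map $s \mapsto s^2/4$ on $[0,4)$.

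First I would verify the two elementary facts that make $Q_\delta$ a natural trap region. Checking that the origin is a fixed point is immediate from \eqref{A2}. Nonnegativity is preserved under $W$ because all coefficients $p^{(f)}_{ir,j}, p^{(m)}_{ir,l}$ are assumed nonnegative and the iterates are bilinear in $x_i, y_r$; so the closed cone $\{x_j,y_l\geq 0\}$ is forward-invariant.

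The key step is the mass identity. Summing all $\eta+\nu$ coordinates of $W(t)$ and using the normalization $\sum_j p^{(f)}_{ir,j}+\sum_l p^{(m)}_{ir,l}=1$ gives
\begin{equation*}
\sum_{j=1}^{\eta} x'_j + \sum_{l=1}^{\nu} y'_l \;=\; \sum_{i,r} x_i y_r \;=\; \Bigl(\sum_{i} x_i\Bigr)\Bigl(\sum_{r} y_r\Bigr).
\end{equation*}
Writing $X=\sum_i x_i$, $Y=\sum_r y_r$, $S=X+Y$, the AM-GM inequality yields $XY\leq S^2/4$, so the sequence $S_n$ of total masses satisfies the scalar inequality $S_{n+1}\leq S_n^{2}/4$.

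Now I would analyze the one-dimensional map $f(s)=s^2/4$. Its fixed points on $[0,\infty)$ are $0$ and $4$, and for every $s\in[0,4)$ we have $f(s)<s$. Starting from $S_0\leq\delta<4$, induction gives $S_n\leq\delta<4$, so $(S_n)$ is monotone nonincreasing and bounded below by $0$; its limit $L\in[0,\delta]$ must satisfy $L\leq L^2/4$, forcing $L=0$. Finally, because each coordinate is nonnegative, $0\leq x_j^{(n)},y_l^{(n)}\leq S_n\to 0$, which gives \eqref{Int.-6} componentwise.

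There is no genuinely difficult step here; the only subtlety is realizing that the sharp constant $4$ in the hypothesis $\delta<4$ comes precisely from AM-GM being tight when $X=Y$, so the threshold cannot be pushed further without additional assumptions on the coefficients.
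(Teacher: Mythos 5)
Your proof is correct and follows essentially the same route as the paper: both reduce the dynamics to the total mass via the identity $\sum_j x'_j+\sum_l y'_l=\bigl(\sum_i x_i\bigr)\bigl(\sum_r y_r\bigr)$, bound it by $S^2/4$ using AM--GM, and analyze the scalar map $s\mapsto s^2/4$ on $[0,4)$. The only cosmetic difference is that the paper computes the explicit iterate $f^n(\delta)=4(\delta/4)^{2^n}\to 0$ while you use a monotone-limit argument; both are valid.
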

\begin{proof} It is not difficult to see that
$s=(0,0,...,0)\in \mathbb{R}^{\eta+\nu}$ is an attracting fixed
point for the operator \eqref{A2}. If $t\in{Q_\delta}$, then from
\eqref{A2} we get $x'_j\geq{0}$, $y'_l\geq{0}$, for
$j=\overline{1,\eta}$, $l=\overline{1,\nu}$, and
$$\sum_{j=1}^{\eta}x'_j+\sum_{l=1}^{\nu}y'_l=
\sum_{j=1}^{\eta}x_j\cdot\sum_{l=1}^{\nu}y_l\leq\frac{1}{4}\Bigl(\sum_{j=1}^{\eta}x_j+
\sum_{l=1}^{\nu}y_l\Bigr)^2\leq{\frac{\delta^2}{4}<\delta}.$$
Therefore $$t'=(x'_1,...,x'_\eta,y'_1,...,y'_\nu)\in
Q_{\frac{\delta^2}{4}}\subset {Q_\delta}.$$ Denoting
$f(\delta)=\frac{\delta^2}{4}$, we can write
$$W^{n}(Q_\delta)\subset{W^{n-1}(Q_{f(\delta)})}\subset{W^{n-2}(Q_{f^{2}(\delta)})}
\subset...\subset{Q_{f^{n}(\delta)}}.$$ Since
$\lim\limits_{n\rightarrow{\infty}}f^{n}(\delta)=
\lim\limits_{n\rightarrow{\infty}}4\bigl(\frac{\delta}{4}\bigr)^{2^n}=0$,
we get $$\lim\limits_{n\rightarrow \infty}W^n
(Q_\delta)\subset{Q_0}=\{{\underbrace {(0,0,...,0)}_{\eta+\nu}}\}
\ ,$$ which completes the proof.
\end{proof}
If $a_1\in{(0,1)}$, then for any initial point $t_0=(x_0,0,u_0,0)$
for the operator \eqref{A1} we have
\begin{align*}
\lim\limits_{n\rightarrow\infty}W^{n}(t_0)
&=\Bigl(\frac{1}{a_2}(a_1a_2x_0u_0)^{2^{n-1}},
0,\frac{1}{a_1}(a_1a_2x_0u_0)^{2^{n-1}},0\Bigr)\\[1ex]
&= \left\{
\begin{array}{ll}
(0,0,0,0), &\quad if \quad |x_0u_0|<\frac{1}{a_1a_2},\\
\bigl(\frac{1}{a_2},0,\frac{1}{a_1},0\bigr), &\quad if \quad |x_0u_0|=\frac{1}{a_1a_2},\\
+\infty, &\quad if \quad |x_0u_0|>\frac{1}{a_1a_2}.
\end{array}
\right.
\end{align*}

If  $a_1=0$, then $W(t_0 )=(0,0,x_0 u_0,0)$ and $W^{n}(t_0
)=(0,0,0,0)$ for all $n\geq{2}$. If  $a_1=1$, then $W(t_0 )=(x_0
u_0,0,0,0)$ and $W^{n}(t_0 )=(0,0,0,0)$ for all $n\geq{2}$. Thus
for the cases $a_1=0$ and $a_1=1$, we have
$$\lim\limits_{n\rightarrow \infty}W^{n}(t_0 )=(0,0,0,0).$$
If $d_1,d_3\in{(0,1)}$ and $d_2=0$, then for any initial point
$t_0=(0,y_0,0,v_0)$ we have
\begin{align*}
\lim\limits_{n\rightarrow
\infty}W^{n}(t_0)&=\Bigl(0,\frac{1}{d_3}(d_1d_3y_0v_0)^{2^{n-1}},
0,\frac{1}{d_1}(d_1d_3y_0v_0)^{2^{n-1}}\Bigr)\\[1ex]
&=\left\{
\begin{array}{ll}
(0,0,0,0), &\quad  if \quad |y_0v_0|<\frac{1}{d_1d_3}, \\
\bigl(0,\frac{1}{d_3},0,\frac{1}{d_1}\bigr), &\quad if \quad |y_0v_0|=\frac{1}{d_1d_3}, \\
+\infty, &\quad if \quad |y_0v_0|>\frac{1}{d_1d_3}.
\end{array}
\right.
\end{align*}
If $d_1=d_2=0$, then $W(t_0)=(0,0,0,y_0v_0)$ and $W^{n}(t_0
)=(0,0,0,0)$ for all $n\geq{2}$. If  $d_2=d_3=0$, then $W(t_0
)=(0,y_0v_0,0,0)$ and $W^{n}(t_0 )=(0,0,0,0)$ for all $n\geq{2}$.
Thus for the cases $d_1=d_2=0$ and $d_2=d_3=0$, we have
$$\lim\limits_{n\rightarrow \infty}W^{n}(t_0 )=(0,0,0,0).$$
If $b_2,b_3\in{(0,1)}$ and $b_1=b_4=0$, then for any initial point
$t_0=(0,y_0,u_0,0)$ we have
\begin{align*}
\lim\limits_{n\rightarrow\infty}W^{n}(t_0)
&=\Bigl(0,\frac{1}{b_3}(b_2b_3y_0u_0)^{2^{n-1}},
\frac{1}{b_2}(b_2b_3y_0u_0)^{2^{n-1}},0\Bigr)\\[1ex]
&= \left\{
\begin{array}{ll}
(0,0,0,0), &\quad if \quad |y_0u_0|<\frac{1}{b_2b_3}, \\
\bigl(0,\frac{1}{b_3},\frac{1}{b_2},0\bigr), &\quad if \quad |y_0u_0|=\frac{1}{b_2b_3}, \\
+\infty, &\quad if \quad |y_0u_0|>\frac{1}{b_2b_3}.
\end{array}
\right.
\end{align*}
If $b_1=b_2=b_4=0$, then $W(t_0)=(0,0,y_0u_0,0)$ and $W^{n}(t_0
)=(0,0,0,0)$ for all $n\geq{2}$. If $b_1=b_3=b_4=0$, then $W(t_0
)=(0,y_0u_0,0,0)$ and $W^{n}(t_0 )=(0,0,0,0)$ for all $n\geq{2}$.
Hence for cases $b_1=b_2=b_4=0$ and $b_1=b_3=b_4=0$ we have
$$\lim\limits_{n\rightarrow \infty}W^n (t_0 )=(0,0,0,0)$$

\begin{lemma}\label{B1}
Let
$$Q_4=\{(x,y,u,v)\in{\mathbb{R}^4}: x\geq0,
y\geq0, u\geq0,  v\geq0, \, x+y+u+v\leq{4} \}.$$ For any initial
point $t\in{Q_4}$ if there exists $k\geq0$ such that
$$\bigl(a_1-\frac{1}{2}\bigr)x^{(k)}u^{(k)}+\bigl(c_1-\frac{1}{2}\bigr)x^{(k)}v^{(k)}
+\bigl(b_1+b_2-\frac{1}{2}\bigr)y^{(k)}u^{(k)}+\bigl(d_1-\frac{1}{2}\bigr)y^{(k)}v^{(k)}\neq{0}$$
then
\begin{equation}\label{B2}
\lim\limits_{n\rightarrow \infty}W^n (t)=s_1=(0,0,0,0).
\end{equation}
\end{lemma}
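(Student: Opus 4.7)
The plan is to use the total mass $S_n := x^{(n)}+y^{(n)}+u^{(n)}+v^{(n)}$ as a Lyapunov-type function. First I would add the four components of \eqref{A1} and invoke the normalisations $a_1+a_2=c_1+c_2=b_1+b_2+b_3+b_4=d_1+d_2+d_3=1$ to obtain the identity
$$S_{n+1}=\bigl(x^{(n)}+y^{(n)}\bigr)\bigl(u^{(n)}+v^{(n)}\bigr).$$
Since all coefficients and all initial coordinates are nonnegative, $W$ preserves nonnegativity, and the AM--GM inequality gives $S_{n+1}\le S_n^2/4$ with equality if and only if $x^{(n)}+y^{(n)}=u^{(n)}+v^{(n)}$. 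In particular $S_0\le 4$ forces $S_n\le 4$ for all $n$, so $Q_4$ is forward-invariant under $W$.

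Next I would connect the hypothesis to this picture by computing
$$(x'+y')-(u'+v')=2\Bigl[\bigl(a_1-\tfrac{1}{2}\bigr)xu+\bigl(c_1-\tfrac{1}{2}\bigr)xv+\bigl(b_1+b_2-\tfrac{1}{2}\bigr)yu+\bigl(d_1-\tfrac{1}{2}\bigr)yv\Bigr],$$
which again uses only the normalisations (for instance $a_1-a_2=2(a_1-\tfrac{1}{2})$). Thus the hypothesis at step $k$ says precisely that $x^{(k+1)}+y^{(k+1)}\ne u^{(k+1)}+v^{(k+1)}$, and the strict form of AM--GM at step $k+1$ yields
$$S_{k+2}<\frac{S_{k+1}^{\,2}}{4}\le 4,$$
so some iterate $S_N$ (with $N=k+2$) satisfies $S_N<4$ strictly.

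From this point the conclusion follows by iteration: setting $\alpha=S_N/4<1$, induction on $m$ gives $S_{N+m}\le 4\alpha^{2^m}\to 0$, hence $S_n\to 0$. Because each coordinate is nonnegative and bounded by $S_n$, this forces $W^n(t)\to(0,0,0,0)=s_1$. The genuine obstacle is the boundary case $S_0=4$: without the hypothesis one cannot rule out a ``balanced'' orbit satisfying $x^{(n)}+y^{(n)}=u^{(n)}+v^{(n)}=2$ for every $n$, which would yield equality in AM--GM at every step and keep $S_n\equiv 4$. It is precisely such orbits that the assumption in the lemma rules out at some finite step $k$, after which the strict quadratic contraction takes over.
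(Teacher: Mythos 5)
Your proof is correct and follows essentially the same route as the paper: the identity $S_{n+1}=(x^{(n)}+y^{(n)})(u^{(n)}+v^{(n)})$, the observation that the hypothesized expression measures the imbalance between $x'+y'$ and $u'+v'$, and the quadratic contraction $S\mapsto S^2/4$ once some iterate satisfies $S<4$ (which is exactly the paper's Proposition 1). Your phrasing via strict AM--GM is marginally tidier than the paper's reduction to the case $x+y=u+v=2$ followed by the factorization $(2+E)(2-E)=4-E^2$, but it is the same argument.
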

\begin{proof} Since $t\in{Q_4}$ and
$$x'+y'+u'+v'=(x+y)(u+v)\leq{\bigl(\frac{x+y+u+v}{2}\bigr)^2}=4,$$ we have
$x+y=2$, $u+v=2$. Otherwise, $x'+y'+u'+v'<4$ and \eqref{B2}
follows by Proposition~\ref{A9}. Hence
$$x'+y'+u'+v'=4,$$ where
$$x'+y'=2+\bigl(a_1-\frac{1}{2}\bigr)xu+\bigl(c_1-\frac{1}{2}\bigr)xv+
\bigl(b_1+b_2-\frac{1}{2}\bigr)yu+\bigl(d_1-\frac{1}{2}\bigr)yv$$
and
$$u'+v'=2-[\bigl(a_1-\frac{1}{2}\bigr)xu+\bigl(c_1-\frac{1}{2}\bigr)xv+
\bigl(b_1+b_2-\frac{1}{2}\bigr)yu+\bigl(d_1-\frac{1}{2})yv\bigr].$$
If $\bigl(a_1-\frac{1}{2}\bigr)xu+\bigl(c_1-\frac{1}{2}\bigr)xv+
\bigl(b_1+b_2-\frac{1}{2}\bigr)yu+\bigl(d_1-\frac{1}{2}\bigr)yv\neq0$,
then
$$W^{2}(t)\in{Q_{4-\bigl[\bigl(a_1-\frac{1}{2}\bigr)xu+\bigl(c_1-\frac{1}{2}\bigr)xv+
\bigl(b_1+b_2-\frac{1}{2}\bigr)yu+\bigl(d_1-\frac{1}{2}\bigl)yv\bigl]^2}}$$
and \eqref{B2} again follows by Proposition~\ref{A9}. Repeating
this argument we get that, if
$$\bigl(a_1-\frac{1}{2}\bigr)x'u'+\bigl(c_1-\frac{1}{2}\bigr)x'v'+
\bigl(b_1+b_2-\frac{1}{2}\bigr)y'u'+\bigl(d_1-\frac{1}{2}\bigl)y'v'\neq0,$$
then $$W^{3}(t)\in{Q_{4-\bigl[\bigl(a_1-\frac{1}{2}\bigr)x'u'+
\bigl(c_1-\frac{1}{2}\bigr)x'v'+\bigl(b_1+b_2-\frac{1}{2}\bigr)y'u'+
\bigl(d_1-\frac{1}{2}\bigr)y'v'\bigr]^2}}.$$ Otherwise, we iterate
the argument again and conclude that if there exists $k\geq0$ such
that $$\bigl(a_1-\frac{1}{2}\bigr)x^{(k)}u^{(k)}+
\bigl(c_1-\frac{1}{2}\bigr)x^{(k)}v^{(k)}+\bigl(b_1+b_2-\frac{1}{2}\bigr)y^{(k)}u^{(k)}+
\bigl(d_1-\frac{1}{2}\bigr)y^{(k)}v^{(k)}\neq{0},$$ then
\eqref{B2} is satisfied.
\end{proof}
\begin{lemma}\label{B3}
Let $$\Delta=\{(x,y,u,v)\in{\mathbb{R}^4}: x\geq0, y\geq0, u\geq0,
v\geq0, \, x+y+u+v>4\}.$$ For any initial point $t\in{\Delta}$,

\smallskip
\noindent {\rm(i)} if there exists $k\geq0$ such that $(x^{(k)}
+y^{(k)})(u^{(k)}+v^{(k)})<4$, then {\upshape{\eqref{B2}}} holds.

\smallskip
\noindent {\rm(ii)} if \, $\max\{a_1a_2xu, b_2b_3yu,
d_1d_3yv\}>1$, then $\lim\limits_{n\rightarrow
\infty}W^{n}(t)=\infty$, i.e. at least one coordinate of
$W^{n}(t)$ tends to $\infty$ as $n\to\infty$.
\end{lemma}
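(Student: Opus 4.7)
The plan is to exploit the \emph{mass-balance identity} arising from the stochasticity constraints together with Proposition~\ref{A9}, and to handle (i) and (ii) by two short but distinct arguments.

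For part (i), I would first establish the identity
\[
x'+y'+u'+v' = (x+y)(u+v),
\]
which is obtained by summing the four equations in \eqref{A1} and collecting like terms, using $a_1+a_2=c_1+c_2=b_1+b_2+b_3+b_4=d_1+d_2+d_3=1$. This is exactly the step already invoked in the proof of Lemma~\ref{B1}. Applying it at the index $k$ provided by the hypothesis yields
\[
x^{(k+1)}+y^{(k+1)}+u^{(k+1)}+v^{(k+1)} = (x^{(k)}+y^{(k)})(u^{(k)}+v^{(k)}) < 4.
\]
Since every coefficient in \eqref{A1} is non-negative, the non-negative orthant is forward-invariant, so $W^{k+1}(t)\in Q_\delta$ with $\delta:=(x^{(k)}+y^{(k)})(u^{(k)}+v^{(k)})\in[0,4)$. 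Proposition~\ref{A9} then delivers \eqref{B2} directly.

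For part (ii), I would extract coordinate-wise lower bounds from \eqref{A1} and iterate. In the case $a_1a_2 xu>1$, the inequalities $x'\geq a_1 xu$ and $u'\geq a_2 xu$ give $x'u'\geq a_1a_2(xu)^2$, hence
\[
a_1a_2\, x'u' \;\geq\; (a_1a_2\, xu)^2.
\]
By straightforward induction,
\[
a_1a_2\, x^{(n)}u^{(n)} \;\geq\; (a_1a_2\, xu)^{2^{n}} \;\longrightarrow\; \infty,
\]
so at least one of the sequences $\{x^{(n)}\}$, $\{u^{(n)}\}$ is unbounded. The remaining cases are symmetric: if $b_2b_3 yu>1$, use $y'\geq b_2 yu$ and $u'\geq b_3 yu$ to get $b_2b_3 y^{(n)}u^{(n)}\geq(b_2b_3 yu)^{2^n}$; if $d_1d_3 yv>1$, use $y'\geq d_1 yv$ and $v'\geq d_3 yv$ to get $d_1d_3 y^{(n)}v^{(n)}\geq(d_1d_3 yv)^{2^n}$.

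I do not foresee a substantive obstacle. The only things to verify carefully are that non-negativity is preserved along the trajectory (which is immediate from the non-negative coefficients and $t\in\Delta$) and that Proposition~\ref{A9} is indeed applicable at step $k+1$ in part (i), i.e.\ that $\delta<4$ lies in the admissible range. Both checks are routine, and the doubling-exponent blow-up in part (ii) is the natural one-shot consequence of the quadratic nature of $W$.
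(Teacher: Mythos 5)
Your proposal is correct and follows essentially the same route as the paper: part (i) via the mass-balance identity plus Proposition~\ref{A9}, and part (ii) via the coordinate-wise lower bounds iterated to a doubling exponent. The only cosmetic difference is that you track the product $a_1a_2x^{(n)}u^{(n)}$ whereas the paper writes the individual bounds $x^{(n+1)}\geq \frac{1}{a_2}(a_1a_2xu)^{2^{n}}$ and $u^{(n+1)}\geq \frac{1}{a_1}(a_1a_2xu)^{2^{n}}$; feeding your product bound back into $x^{(n+1)}\geq a_1x^{(n)}u^{(n)}$ recovers these and upgrades ``one coordinate is unbounded'' to ``both coordinates tend to $\infty$,'' as the lemma asserts.
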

\begin{proof}
Part (i) of this lemma simply follows from the identity
$$x^{(k+1)} +y^{(k+1)}+u^{(k+1)}+v^{(k+1)}=(x^{(k)} +y^{(k)})(u^{(k)}+v^{(k)}),$$
and by the Proposition \ref{A9}. We prove the claim in (ii) for
the case
$$\text{max}\{a_1a_2xu, \, b_2b_3yu, \,
d_1d_3yv\}=a_1a_2xu>1.$$ Other cases can be proven similarly. To
this end, observe that for any $t=(x,y,u,v)\in{\mathbb{R}^4}$ with
$x\geq0, y\geq0, u\geq0, v\geq0\}$ we get from \eqref{A1} that
$$x^{(k+1)}\geq{a_1x^{(k)}u^{(k)}}, \quad
u^{(k+1)}\geq{a_2x^{(k)}u^{(k)}}, \quad k=0,1,... \ .$$ By
iterating these inequalities, we obtain
$$x^{(k+1)}\geq{\frac{1}{a_2}{[a_1a_2xu]^{2^k}}}, \quad
u^{(k+1)}\geq{\frac{1}{a_1}{[a_1a_2xu]^{2^k}}}, \quad k=0,1,... \
.$$ This completes the proof.
\end{proof}
Part (ii) of Lemma  \ref{B3}  can be generalized as follows.
\begin{proposition}\label{D}
Let the coefficients of the operator \eqref{A2} and the
coordinates of an initial point $t$ be nonnegative real numbers.
If
\begin{equation*}
\max_{\substack{1\leq i,\,r\leq \eta, \\ 1\leq
j,\,l\leq\nu}}\bigl\{p_{ij,r}^{(f)}p_{ij,l}^{(m)}x_{r}y_l\bigr\}>1,
\end{equation*}
then $\lim\limits_{n\rightarrow \infty}W^{n}(t)=\infty$, i.e. at
least one coordinate of $W^{n}(t)$ tends to $\infty$ as
$n\to\infty$.
\end{proposition}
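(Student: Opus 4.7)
The plan is to extend the self-replication argument of Part~(ii) of Lemma~\ref{B3} to the general setting. The key observation is that, since all coefficients and coordinates are nonnegative, dropping all but a single summand in each sum of \eqref{A2} yields termwise lower bounds
\[
x_r^{(n+1)} \;\geq\; p_{ij,r}^{(f)}\, x_i^{(n)} y_j^{(n)}, \qquad y_l^{(n+1)} \;\geq\; p_{ij,l}^{(m)}\, x_i^{(n)} y_j^{(n)},
\]
for any choice of parent indices $(i,j)$ and offspring indices $(r,l)$. This is the direct analogue of the inequalities $x^{(k+1)}\geq a_1 x^{(k)} u^{(k)}$ and $u^{(k+1)}\geq a_2 x^{(k)} u^{(k)}$ used in the proof of Lemma~\ref{B3}(ii).

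Next, I would focus on the self-replicating diagonal case $i=r$, $j=l$---which is exactly what arises for each of the three admissible products $a_1 a_2 xu$, $b_2 b_3 yu$, $d_1 d_3 yv$ appearing in Lemma~\ref{B3}(ii). Here the two termwise inequalities close into a recursion for the single quantity $C_n := p_{ij,r}^{(f)} p_{ij,l}^{(m)}\, x_r^{(n)} y_l^{(n)}$: multiplying them and using $i=r$, $j=l$ gives $C_{n+1}\geq C_n^{2}$, and iterating from the hypothesis $C_0>1$ yields $C_n\geq C_0^{2^n}\to\infty$. Consequently $x_r^{(n)} y_l^{(n)}\to\infty$, forcing at least one of $x_r^{(n)}$, $y_l^{(n)}$ to diverge, which is exactly the conclusion of the proposition.

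The hard part is the non-diagonal case: when the maximum in the hypothesis is attained at indices with $i\neq r$ or $j\neq l$, the single-step squaring recursion $C_{n+1}\geq C_n^{2}$ does not close, because the termwise inequalities involve $x_i^{(n)} y_j^{(n)}$ rather than $x_r^{(n)} y_l^{(n)}$. To handle this, one would need to chain the argument through one or more intermediate generations, using lower bounds on $x_i^{(n)}$ and $y_j^{(n)}$ produced one step earlier from parent types whose densities can in turn be bounded below by $x_r^{(n-1)} y_l^{(n-1)}$, and then verify that the composed inequality still produces double-exponential growth. Clarifying the indexing convention in the proposition's statement and executing this closure cleanly is the principal delicate point; once that is done, the same $C_{n+1}\geq C_n^{2^s}$-type squaring (over $s$-step blocks) finishes the proof in complete analogy with Lemma~\ref{B3}(ii).
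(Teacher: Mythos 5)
Your treatment of the diagonal case $i=r$, $j=l$ is precisely the paper's proof: the authors drop all but one summand to get $x_1^{(k+1)}\ge p_{11,1}^{(f)}x_1^{(k)}y_1^{(k)}$ and $y_1^{(k+1)}\ge p_{11,1}^{(m)}x_1^{(k)}y_1^{(k)}$, iterate these to obtain lower bounds of the form $\bigl[p_{11,1}^{(f)}p_{11,1}^{(m)}x_1y_1\bigr]^{2^k}$, and then assert that ``other cases can be proven similarly.'' So for the self-replicating products your argument is complete and identical to theirs.

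You are right, however, that the non-diagonal case does not close, and the program you sketch for it (chaining through intermediate generations) cannot be carried out, because the statement is false as literally written when the maximum is attained at indices with $(r,l)\neq(i,j)$. Take $\eta=2$, $\nu=1$, with $p_{11,1}^{(f)}=0$, $p_{11,2}^{(f)}=p_{11,1}^{(m)}=\tfrac12$, $p_{21,1}^{(f)}=p_{21,2}^{(f)}=0$, $p_{21,1}^{(m)}=1$ (both normalization constraints hold), and $t=(x_1,x_2,y_1)=(0,100,1)$. Then $p_{11,2}^{(f)}p_{11,1}^{(m)}x_2y_1=25>1$, so the hypothesis of the proposition is satisfied, yet $W(t)=(0,0,100)$ and $W^n(t)=(0,0,0)$ for all $n\ge2$. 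The proposition is therefore only correct when read as a statement about the self-replicating products $p_{ij,i}^{(f)}p_{ij,j}^{(m)}x_iy_j$ --- exactly the pattern of the three products $a_1a_2xu$, $b_2b_3yu$, $d_1d_3yv$ appearing in Lemma~\ref{B3}(ii) --- and under that reading your diagonal argument is the whole proof. In short: the gap you flagged is real, but it is a defect of the statement (and of the paper's ``similarly''), not something a longer chaining argument can repair; you should restrict the hypothesis rather than try to close the recursion.
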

\begin{proof}
We prove the claim only for the case
\begin{equation*}
\max_{\substack{1\leq i,\,r\leq \eta,\\ 1\leq
j,\,l\leq\nu}}\bigl\{p_{ij,r}^{(f)}p_{ij,l}^{(m)}x_{r}y_l\bigr\}
=p_{11,1}^{(f)}p_{11,1}^{(m)}x_{1}y_1>1.
\end{equation*}
Other cases can be proven similarly. To this end, observe that for
any initial point $t\in P$, we get from \eqref{A2} that
$${x_{1}}^{(k+1)}\geq{p_{11,1}^{(f)}{x_1}^{(k)}{y_1}^{(k)}}, \quad
{y_1}^{(k+1)}\geq{p_{11,1}^{(m)}{x_1}^{(k)}{y_1}^{(k)}}, \quad
k=0,1,... \ .$$ By iterating these inequalities, we obtain
$${x_{1}}^{(k+1)}\geq{\frac{1}{p_{11,1}^{(m)}}
{[p_{11,1}^{(f)}p_{11,1}^{(m)}x_{1}y_1]^{2^k}}}, \quad
{y_{1}}^{(k+1)}\geq{\frac{1}{p_{11,1}^{(f)}}{[p_{11,1}^{(f)}
p_{11,1}^{(m)}x_{1}y_1]^{2^k}}}, \quad k=0,1,.. \ .$$ If
$p_{11,1}^{(f)}p_{11,1}^{(m)}=0$ then we go to the other cases.
Proposition \ref{D} is proved.
\end{proof}
Let us make the notations
\begin{align*}
O        &=\{(0,0,u,v)\in\mathbb{R}^{4}:u,v\in\mathbb{R}\}\cup\{(x,y,0,0)\in\mathbb{R}^{4}:x,y\in\mathbb{R}\}\\
I        &=\{(x,y,u,v)\in\mathbb{R}^{4}:y=v=0 \}\\
J        &=\{(x,y,u,v)\in{I}: x=u\}\\
P        &=\{(x,y,u,v)\in\mathbb{R}^{4}: x\geq0, y\geq0, u\geq0, v\geq0 \}\\
P_0      &=\{(x,y,u,v)\in{P}: (x+y)(u+v)<4\}\\
Q_a      &=\{(x,y,u,v)\in{P}: x+y+u+v\leq{a}\}, \quad a\in{[0,4]}\\
N        &=\{(x,y,u,v)\in\mathbb{R}^{4}: x\leq0, y\leq0, u\leq0, v\leq0 \}\\
N_0      &=\{(x,y,u,v)\in\mathbb{R}^{4}: x\leq0, y\leq0, u\geq0, v\geq0 \}\\
N_1      &=\{(x,y,u,v)\in\mathbb{R}^{4}: x\geq0, y\geq0, u\leq0, v\leq0 \}\\
\Delta_0 &=\{(x,y,u,v,)\in{P}: x+y+u+v>4, \quad max\{a_1a_2xu, \,
b_2b_3yu, \, d_1d_3yv\}>1\}.
\end{align*}
The sets $I$, $J$, $P$ and $Q_a$, where $a\in{[0,4]}$, are
invariant with respect to the operator \eqref{A1}. Moreover, we
have
$$W(O)=\{(0,0,0,0)\}, \quad W(Q_a)\subset{Q_\frac{a^2}{4}}, \quad W(N)\subset{P}, \quad W(N_0)\subset{N}, \quad
W(N_1)\subset{N}.$$ Summarizing above observations, we get the
following result.

\begin{theorem}\label{thm:main}
If $t=(x,y,u,v)\in{\mathbb{R}^4}$ is such that

\smallskip
\noindent {\rm(i)} one of the following conditions is satisfied

1) $t\in{P_0}$,

2) $t\in{Q_4}$ and {\upshape{Lemma \ref{B1}}} holds,

3) $t\in{N},  \,\, \quad W(t)\,\;\in{P_0}$,

4) $t\in{N_0}, \quad W^{2}(t)\in{P_0}$,

5) $t\in{N_1}, \quad W^{2}(t)\in{P_0}$,

then $$\lim\limits_{n\rightarrow \infty}W^n (t)=s_1=(0,0,0,0).$$

\noindent {\rm(ii)} one of the following conditions is satisfied

1) $t\in{\Delta_0}$,

2) $t\in{N},   \, \, \quad W(t) \,\; \in{\Delta_0}$,

4) $t\in{N_0}, \quad W^{2}(t)\in{\Delta_0}$,

5) $t\in{N_1}, \quad W^{2}(t)\in{\Delta_0}$,

then $$\lim\limits_{n\rightarrow \infty}W^n (t)=\infty,$$ i.e. at
least one coordinate of $W^{n}(t)$ tends to $\infty$.
\end{theorem}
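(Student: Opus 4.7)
The plan is to derive Theorem~\ref{thm:main} by assembling the previously established convergence results --- Proposition~\ref{A9}, Lemma~\ref{B1}, and Lemma~\ref{B3}(ii) --- together with the invariance inclusions $W(N)\subset P$, $W(N_0)\subset N$, $W(N_1)\subset N$, and $W(Q_a)\subset Q_{a^2/4}$ recorded just above the theorem statement. The common engine is the identity $x'+y'+u'+v'=(x+y)(u+v)$, obtained by summing the four components of \eqref{A1}; it is what detects when a forward iterate lands inside $Q_\delta$ for some $\delta<4$.

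For part (i), I would dispatch the five subcases in turn. Subcase (i.1) is immediate: if $t\in P_0$ then $W(t)\in Q_\delta$ with $\delta=(x+y)(u+v)<4$, and Proposition~\ref{A9} yields $W^n(W(t))\to s_1$, hence $W^n(t)\to s_1$. Subcase (i.2) is exactly the statement of Lemma~\ref{B1}, so nothing more is required there. For (i.3)--(i.5) the strategy is to shift the starting point forward along the orbit until it enters the positive orthant. If $t\in N$ then $W(t)\in P$ by $W(N)\subset P$, and the hypothesis $W(t)\in P_0$ reduces the situation to (i.1). If $t\in N_0$ or $t\in N_1$, two iterations land us in $P$ via $W(N_0)\cup W(N_1)\subset N$ followed by $W(N)\subset P$, and the hypothesis $W^2(t)\in P_0$ again reduces matters to (i.1); the semigroup identity $W^{n+k}=W^n\circ W^k$ transfers the convergence of the shifted orbit back to the original trajectory.

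For part (ii) the argument runs in exact parallel. Subcase (ii.1) is exactly Lemma~\ref{B3}(ii). The remaining subcases are reduced to (ii.1) by the same invariance-induced orbit shifts: one or two iterations place the state inside $\Delta_0$, and the semigroup identity transfers the divergence of the shifted orbit back to the original one, so that at least one coordinate of $W^n(t)$ tends to infinity.

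The main obstacle, such as it is, is bookkeeping rather than analysis: one must verify that the listed invariance inclusions genuinely carry negative-orthant points into the region where Proposition~\ref{A9} and Lemma~\ref{B3}(ii) are applicable, and one must remember to invoke the identity $x'+y'+u'+v'=(x+y)(u+v)$ when checking that $P_0$ is mapped strictly inside $Q_4$. There is no substantive new calculation; the theorem is a packaging of the preceding lemmas via the orbit-shift trick.
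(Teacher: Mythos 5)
Your proposal is correct and follows essentially the same route as the paper: the authors present Theorem~\ref{thm:main} explicitly as a summary of Proposition~\ref{A9}, Lemmas~\ref{B1} and \ref{B3}, and the invariance inclusions listed just before the statement, giving no separate proof, and your assembly of these pieces (the sum identity $x'+y'+u'+v'=(x+y)(u+v)$ to place $W(P_0)$ inside some $Q_\delta$ with $\delta<4$, plus the orbit-shift reductions for the cases starting in $N$, $N_0$, $N_1$) is exactly the intended argument.
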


\section{Conclusion}
\noindent We have considered the dynamical systems of a hemophilia
generated by gonosomal evolution operator of sex linked
inheritance in $\mathbb{R}^4$ depending on parameters and studied
their trajectory behavior. In Section \ref{S1} it is proven that
operator \eqref{A1} has a unique attracting fixed point and the
other fixed points might be either nonhyperbolic or saddle. We
note that the union of sets for initial points considered in
Theorem~{\upshape{\ref{thm:main}}} does not cover $\mathbb{R}^4$
and the question of description of the entire $\omega$-limit sets
for the fixed points $s_2, s_3, s_4, s_5$ is remained as an open
problem. However, due to the eigenvalues which we have found in
section \ref{S1}, we can give exact measure of stable and unstable
manifolds of those fixed points, see \cite{Devaney.book.2003} for
more details. The dynamical systems considered in this paper are
interesting as they are examples for nonlinear higher dimensional
discrete-time dynamical systems that have not been fully
understood yet.

\end{document}